\begin{document}

\newcommand{\norm}[1]{\ensuremath{\|#1\|}}
\newcommand{\abs}[1]{\ensuremath{\vert#1\vert}}
\newcommand{\p}{\ensuremath{\partial}}
\newcommand{\pr}{\mathcal{P}}

\newcommand{\pbar}{\ensuremath{\bar{\partial}}}
\newcommand{\db}{\overline\partial}
\newcommand{\D}{\mathbb{D}}
\newcommand{\B}{\mathbb{B}}
\newcommand{\Sp}{\mathbb{S}}
\newcommand{\T}{\mathbb{T}}
\newcommand{\R}{\mathbb{R}}
\newcommand{\Z}{\mathbb{Z}}
\newcommand{\C}{\mathbb{C}}
\newcommand{\N}{\mathbb{N}}
\newcommand{\scrH}{\mathcal{H}}
\newcommand{\scrL}{\mathcal{L}}
\newcommand{\td}{\widetilde\Delta}

\newcommand{\La}{\langle }
\newcommand{\Ra}{\rangle }
\newcommand{\rk}{\operatorname{rk}}
\newcommand{\card}{\operatorname{card}}
\newcommand{\ran}{\operatorname{Ran}}
\newcommand{\im}{\operatorname{Im}}
\newcommand{\re}{\operatorname{Re}}
\newcommand{\tr}{\operatorname{tr}}
\newcommand{\vf}{\varphi}
\newcommand{\f}[2]{\ensuremath{\frac{#1}{#2}}}

\numberwithin{equation}{section}

\newtheorem{thm}{Theorem}[section]
\newtheorem{lm}[thm]{Lemma}
\newtheorem{defi}[thm]{Definition}
\newtheorem{cor}[thm]{Corollary}
\newtheorem{conj}[thm]{Conjecture}
\newtheorem{prob}[thm]{Problem}
\newtheorem{prop}[thm]{Proposition}
\newtheorem*{prop*}{Proposition}
\newtheorem{lemma}[thm]{Lemma}

\theoremstyle{remark}
\newtheorem{rem}[thm]{Remark}
\newtheorem*{rem*}{Remark}

\title{Simultaneous Stabilization in $A_\R(\D)$}

\author[R. Mortini]{Raymond Mortini}
\address{Raymond Mortini, D\'{e}partement de Math\'{e}matiques\\ LMAM, UMR 7122, Universit\'{e} Paul Verlaine\\ Ile du Saulcy\\  F-57045 Metz, France}
\email{mortini@poncelet.univ-metz.fr}

\author[B. D. Wick]{Brett D. Wick$^*$}
\address{Brett D. Wick, Department of Mathematics\\ University of South Carolina\\ LeConte College\\ 1523 Greene Street\\ Columbia, SC USA 29208}
\address{Present:  The Fields Institute\\ 222 College Street, 2nd Floor\\ Toronto, Ontario\\ M5T 3J1 Canada}
\email{wick@math.sc.edu}
\thanks{$*$ Research supported in part by a National Science Foundation DMS Grant \# 0752703.}

\subjclass[2000]{Primary 46E25; 46J15}

\keywords{Banach Algebras, Control Theory, Corona Theorem, Stable Rank}

\begin{abstract} 
In this note we study the problem of simultaneous stabilization for the algebra $A_\R(\D)$.  
Invertible pairs $(f_j,g_j)$, $j=1,\ldots, n$, in  a commutative unital algebra are called \textit{simultaneously stabilizable} if there exists a pair $(\alpha,\beta)$ of elements such that $\alpha f_j+\beta g_j$ is invertible in this algebra for $j=1,\ldots, n$.  
 
 For $n=2$, the simultaneous stabilization problem admits a positive solution for any data if and only if the Bass stable rank of the algebra is one.  Since $A_\R(\D)$ has stable rank two, we are faced here with a different situation.  When $n=2$, necessary and sufficient conditions are given so that we have simultaneous stability in $A_\R(\D)$.

For $n\geq 3$ we show that under these conditions simultaneous stabilization is not possible and further connect this result to the question of which pairs $(f,g)$ in $A_\R(\D)^2$ are totally reducible; that is, for which pairs do there exist two units $u$ and $v$ in
$A_\R(\D)$ such that $uf+vg=1$.  
\end{abstract}

\maketitle

\section*{Introduction}
\label{s1}

Given a commutative ring (or an algebra) $R$ with unit $1$, we say that a pair $(f,g)\in R^2$ is \textit{invertible} if there exists  $(\alpha,\beta)\in R^2$ such that
$$
\alpha f+\beta g=1,
$$
and write $(f,g)\in U_2(R)$.

We say that $n$ invertible elements $(f_j,g_j)\in U_2(R)$ are \textit{simultaneously stabilizable} if there exists $(\alpha, \beta)\in R^2$ such  that for $j=1,\ldots ,n$
$$
\alpha f_j+\beta g_j\in R^{-1},
$$
where $R^{-1}$ denotes the set of invertible elements in the ring $R$.  

When $n=2$ the notion of simultaneously stabilizable is very close to the notion of the ring $R$ having Bass stable rank one.  Since this notion will play a role in the proofs, we recall this now.  We say that the ring $R$ has \textit{Bass stable rank one} if for any invertible pair $(f,g)\in R^2$ there exists an $h\in R$ such that
$$
f+hg\in R^{-1}.
$$
Note that this can be rephrased as asking for the existence of $\alpha\in R^{-1}$ and $\beta\in R$ such that
$$
\alpha f+\beta g=1.
$$

A further property, even stronger than having Bass stable rank one, called the \textit{unit-1 stable rank}, is to want for every invertible pair $(f,g)\in U_2(R)$ the existence of $\alpha\in R^{-1}$ and 
$\beta\in R^{-1}$ such that
$$
\alpha f+\beta g=1.
$$

In the literature, a pair $(f,g)$ having this property is sometimes called \textit{totally reducible}.  This concept was introduced by P. Menal and J. Moncasi, see \cite{MM}. 
For rings of holomorphic functions on planar domains, this is no longer
of interest, since it is known that no such rings have unit-1 stable rank whenever they  properly
contain the constants ($\R$ or $\C$).  See \cite{MortRup}.
 A related concept is called the Godefroid-Goodearl-Menal property, see \cites{Gode,GM}.  This property is that for each $x,y\in R$ there exists a unit $u\in R^{-1}$ such that $x-u$ and $y-u^{-1}$ are invertible in $R$.  It is known that this property implies that $R$ has unit-1 stable rank.  

When these concepts are applied to various spaces of analytic functions many interesting questions arise.  For the disc algebra, $A(\D)$, these properties are well studied.  The notion of invertible $n$-tuples coincides here with the notion of $n$-tuples satisfying the Corona condition.  See \cite[p.~365]{Rudin}.
The stable rank of $A(\D)$ was computed by Jones, Marshall and Wolff, \cite{JMW}, and the concept of total reducibility and unit-1 stable rank  was studied by
Mortini and Rupp,\cite{MortRup},  and  Blondel, Mortini and Rupp,  \cite{BlonMortRup}.

But, as motivated by Control Theory, the disc algebra is not physically meaningful since the functions take complex values.  So one introduces a more useful algebra, and asks similar questions.

\subsection{Motivations and Main Results}

We will be interested in the case where $R$ is a certain ring of analytic functions, namely the real disc algebra $A_\R(\D)$.  The space $A_\R(\D)$ is the set of functions in the disc algebra with the additional property that
$$
f(z)=\overline{f(\overline{z})}\quad \forall z\in\overline{\D}.
$$
This definition is equivalent to the property that a function $f\in A_\R(\D)$ has a Fourier series expansion with real coefficients.

In this context the notion of invertibility is intimately connected with the Corona Theorem for these algebras.  A pair $(f,g)$ is invertible in $A_\R(\D)$ if and only if
$$
\abs{f(z)}+\abs{g(z)}\geq\delta>0\quad\forall z\in\overline{\D}.
$$ 

The necessity of this result is immediate, while the sufficiency follows from a symmetrization of the usual Corona Theorem for $A(\D)$.   Indeed, for functions which satisfy this condition, we can always find $\alpha,\beta\in A(\D)$ such that
$$
\alpha f+\beta g=1.
$$
See \cite{Rudin}.  One then defines
$$
\tilde{\alpha}(z):=\f{\alpha(z)+\overline{\alpha(\overline{z})}}{2}\quad\textnormal{ and }\quad \tilde{\beta}(z):=\f{\beta(z)+\overline{\beta(\overline{z})}}{2}.
$$
It is immediate that this is the solution to the Bezout equation in $A_\R(\D)$ that we seek.

The question of when the Bezout equation $\alpha f+\beta g=1$  associated with an invertible pair $(f,g)$ in $A_\R(\D)^2$ has a solution $(\alpha,\beta)\in A_\R(\D)^2$  with $\alpha^{-1}$ also in $A_\R(\D)$ was addressed in \cite{Wick1}.  It was shown
 that the pair $(f,g)$ must satisfy an additional condition which is both necessary and sufficient for the existence of $(\alpha,\beta)$ with the desired properties.  This condition will play a role in later arguments, so we recall the definition.

Given an invertible pair $(f,g)$ in $A_\R(\D)^2$, we will say that $f$ is of \textit{constant sign} on the real zeros of $g$, if $f$ has the same sign at all real zeros of $g$.  This condition arises naturally by examining what happens when you have a solution to the Corona problem with an invertible element.  

In fact, as was shown by the second author in \cite{Wick1}, if $(f,g)$ is an invertible pair in 
$A_\R(\D)$, then there exists $h\in A_\R(\D)$ such that $f+hg\in A_\R(\D)^{-1}$ if and only
if $f$ is of constant sign on the real zeros of $g$.  One calls a pair of functions which satisfy this property \textit{reducible}.

We are now going to see that if we have  a solution to a simultaneous stabilization problem in the real disc algebra, then we must have a similar additional necessary condition that our Corona data must satisfy.  Suppose that $(f_1,g_1)$ and $(f_2,g_2)$ are simultaneously stabilizable.  Then we can find functions $\alpha$ and $\beta\in A_\R(\D)$ such that 
$$
\alpha f_1+\beta g_1=1$$
and
$$
\alpha f_2+\beta g_2=u\in A_{\R}(\D)^{-1}.
$$

Using the matricial representation we get
$$\left(
\begin{array}{cc}
  f_1 & g_1 \\
 f_2 & g_2 \\   
\end{array}
\right) \left(
\begin{array}{c}
\alpha\\ \beta
\end{array}
\right)=\left(
\begin{array}{c}
1\\ u
\end{array}
\right).
$$

Then we see that at points $x\in [-1,1]$ where the determinant of the above matrix is zero, 
we have that $(f_2(x),g_2(x))=\lambda(x) (f_1(x),g_1(x))$ for some $\lambda(x)\in \R$.
Hence $\lambda(x)=u(x)$. Since $u$ is invertible, it has constant sign on $[-1,1]$.
Hence $\lambda(x)$ has the same sign at all the real zeros of the function $f_1g_2-f_2g_1$.

\begin {defi}
We say that the pairs $(f_1,g_1)$ and $(f_2,g_2)$ are ``\textit{sign-linked}''
if whenever $(f_2(x),g_2(x))=\lambda(x) (f_1(x),g_1(x))$ for some $x\in [-1,1]$ and $\lambda(x)\in \R$ the function $\lambda(x)$ has constant sign  on the set of real singular points
of the matrix  $\left(
\begin{array}{cc}
  f_1 & g_1 \\
 f_2 & g_2 \\   
\end{array}
\right) $.
\end{defi}
Note that for invertible pairs $(f_j,g_j)$ this notion is symmetric, since $\lambda(x)\not=0$.
We also observe that this is a reasonable (and correct) generalization of the concept being positive on the real zeros of a function.  When $(f_1,g_1)=(1,0)$ and $(f_2,g_2)=(f,g)$ then these pairs 
are sign-linked if and only if $f$ has constant sign on the real zeros of $g$.

One can also ask for more in terms of the solution to the Corona problem.  For example, we are interested in pairs $(f,g)$ of functions in $A_\R(\D)$ that are totally reducible.  This is motivated by the fact  that the ring $A_\R(\D)$ fails to have the unit-1 stable property, since the invertible pair $(z, 1-z^2)$ is not even reducible.

In the context  of $A_\R(\D)$, it is important to note that if 
a pair is totally reducible, then the Corona data must satisfy an additional necessary property. 
To see this, suppose that for $(f,g)\in A_\R(\D)^2$ it is possible to find
 $u, v\in A_\R(\D)^{-1}$ such that
$$
u f+v g=1.
$$
Then $f$ has constant sign on the real zeros of $g$ and similarly $g$ 
must have constant sign on the real zeros of $f$.  This condition on the zeros of $f$ and $g$ is typically called the even interlacing property in the Control Theory literature.  The counterexample that will be constructed will have this necessary property as well.  

\subsubsection{Main Results}

\begin{thm}
\label{SimulStab}
Invertible pairs $(f_1,g_1)$ and $(f_2, g_2)$ of functions in the algebra $A_\R(\D)$ are simultaneously stabilizable if and only if they are sign-linked.


\end{thm}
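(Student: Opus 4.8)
The plan is to prove both implications, the forward one being essentially the computation already recorded before the definition of \emph{sign-linked}. For necessity, given a simultaneous stabilizer $(\alpha,\beta)$ with $\alpha f_1+\beta g_1=u_1$ and $\alpha f_2+\beta g_2=u_2$ both in $A_\R(\D)^{-1}$, I would replace $(\alpha,\beta)$ by $(u_1^{-1}\alpha,u_1^{-1}\beta)$ so that $\alpha f_1+\beta g_1=1$ and $\alpha f_2+\beta g_2=u:=u_2u_1^{-1}\in A_\R(\D)^{-1}$. At a real singular point $x\in[-1,1]$ of $M=\left(\begin{array}{cc} f_1 & g_1 \\ f_2 & g_2 \end{array}\right)$ the rows are dependent and $(f_1(x),g_1(x))\neq(0,0)$, so $(f_2(x),g_2(x))=\lambda(x)(f_1(x),g_1(x))$ for a unique $\lambda(x)\in\R$; substituting into the second Bezout equation gives $u(x)=\lambda(x)\bigl(\alpha(x)f_1(x)+\beta(x)g_1(x)\bigr)=\lambda(x)$. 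Since $u$ is a unit it has constant sign on $[-1,1]$, hence so does $\lambda$ on the real singular set of $M$; this is precisely sign-linkedness.

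For the converse, I would reduce simultaneous stabilization to the reducibility theorem of \cite{Wick1} recalled above. Fix a Bezout pair $(a,b)\in A_\R(\D)^2$ with $af_1+bg_1=1$, available since $(f_1,g_1)$ is invertible. For any $h\in A_\R(\D)$ the pair $\alpha:=a+hg_1$, $\beta:=b-hf_1$ still satisfies $\alpha f_1+\beta g_1=1\in A_\R(\D)^{-1}$, and a direct computation yields
$$
\alpha f_2+\beta g_2 = p+hq,\qquad p:=af_2+bg_2,\quad q:=g_1f_2-f_1g_2 .
$$
Thus it suffices to find $h$ with $p+hq\in A_\R(\D)^{-1}$, i.e.\ to show that the pair $(p,q)$ is reducible in the sense of \cite{Wick1}. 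That $(p,q)$ is an invertible pair follows from the matrix identity
$$
\left(\begin{array}{cc} a & b \\ g_1 & -f_1 \end{array}\right)\left(\begin{array}{c} f_2 \\ g_2 \end{array}\right)=\left(\begin{array}{c} p \\ q \end{array}\right),
$$
whose square factor has determinant $-(af_1+bg_1)=-1$, so $\abs{p}+\abs{q}$ is bounded below on $\overline{\D}$ exactly where $\abs{f_2}+\abs{g_2}$ is, and $(p,q)\in U_2(A_\R(\D))$.

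It remains to verify that $p$ has constant sign on the real zeros of $q$. Note that for $x\in[-1,1]$, $q(x)=0$ if and only if $\det M(x)=f_1(x)g_2(x)-f_2(x)g_1(x)=-q(x)=0$, i.e.\ the real zeros of $q$ are exactly the real singular points of $M$. At such an $x$, writing $(f_2(x),g_2(x))=\lambda(x)(f_1(x),g_1(x))$ with $\lambda(x)\neq 0$ (invertibility of $(f_2,g_2)$), we get
$$
p(x)=a(x)f_2(x)+b(x)g_2(x)=\lambda(x)\bigl(a(x)f_1(x)+b(x)g_1(x)\bigr)=\lambda(x).
$$
Hence $p$ coincides with $\lambda$ on the real zeros of $q$, and sign-linkedness is precisely the statement that $\lambda$ has constant sign there; so $p$ does too. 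By the reducibility theorem of \cite{Wick1} there is an $h\in A_\R(\D)$ with $p+hq\in A_\R(\D)^{-1}$, and then $(\alpha,\beta)=(a+hg_1,\,b-hf_1)$ simultaneously stabilizes $(f_1,g_1)$ and $(f_2,g_2)$. (If $q\equiv 0$, then invertibility of $(p,0)$ forces $p\in A_\R(\D)^{-1}$ and $h=0$ already works, so this case is harmless.)

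The only genuinely external ingredient — and the step I would flag as the crux — is the reducibility criterion of \cite{Wick1}, built on the symmetrized Corona theorem together with the constant-sign analysis at real zeros; the rest of the argument is a change of variables in the Bezout equation which exhibits the determinant $f_1g_2-f_2g_1$ as the ``$g$'' and $af_2+bg_2$ as the ``$f$'' of that theorem, plus the elementary identity $p=\lambda$ on the singular set that ties sign-linkedness of the pairs to constant sign of $p$.
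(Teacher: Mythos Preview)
Your proposal is correct and follows essentially the same route as the paper: the necessity argument reproduces the computation given just before the definition of sign-linked, and for sufficiency you perform the same change of Bezout pair $(\alpha,\beta)=(a+hg_1,b-hf_1)$, reduce to finding $h$ with $p+hq\in A_\R(\D)^{-1}$ for $p=af_2+bg_2$ and $q=g_1f_2-f_1g_2$, and then invoke the reducibility criterion of \cite{Wick1} after checking $p(x)=\lambda(x)$ at real zeros of $q$. Your explicit verification that $(p,q)\in U_2(A_\R(\D))$ via the determinant-$(-1)$ matrix is the same computation the paper carries out in the proof of Theorem~\ref{bass}, and your treatment of the degenerate case $q\equiv 0$ is a harmless extra.
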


We next show that when we consider more than two pairs of Corona data, any two of them being sign-linked, then they are generally not simultaneously stabilizable.  Of course we must add here the sign-linked condition, since otherwise we would already have
a counterexample for the case of two pairs.

It is enough to show this in the case of three pairs of functions.  The construction is similar to what was done in \cite{BlonMortRup}.

\begin{thm}
\label{SimulStab3+}
There exist three pairs of functions $(f_j,g_j)\in U_2(A_\R(\D))$, with $\{(f_1,g_1), (f_2,g_2)\}$, 
$\{(f_1,g_1), (f_3,g_3)\}$ and $\{(f_2,g_2), (f_3,g_3)\}$ being sign-linked,
 that are not simultaneously stabilizable.  That is, for $j=1,2,3$, the problem 
$$
\alpha f_j+\beta g_j \in A_\R(\D)^{-1},
$$
has no solution with $(\alpha,\beta)\in A_\R(\D)^2$.
\end{thm}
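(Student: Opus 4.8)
The plan is to construct the three pairs explicitly, mimicking the strategy of Blondel--Mortini--Rupp \cite{BlonMortRup} but adapted to respect the reality constraint $f(z)=\overline{f(\overline z)}$ and the sign-linked condition. The key idea is to pick the $f_j$ to be (real) polynomials with prescribed, carefully interlacing real zeros in $(-1,1)$, and to pick the $g_j$ so that each pair is invertible (satisfies the Corona/Bezout condition $|f_j|+|g_j|\ge\delta$ on $\overline\D$) while the singular sets of the three $2\times 2$ matrices force incompatible sign requirements once one tries to stabilize all three simultaneously. Concretely, I would first fix a finite set of real points $x_1<x_2<\cdots<x_N$ in $(-1,1)$ and let each $f_j$ vanish at a chosen subset of these, with $g_j$ chosen not to vanish at those points (so each pair is Corona), arranging that for suitable index pairs $(i,j)$ one has $(f_j,g_j)=\lambda_{ij}(f_i,g_i)$ exactly at the common real zeros, and arranging the \emph{local} sign data of $\lambda_{ij}$ so each pairwise sign-linked condition holds but a global consistent choice is impossible.

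Second, I would carry out the reduction to linear algebra: if $(\alpha,\beta)\in A_\R(\D)^2$ simultaneously stabilizes all three pairs, then at any real point $x$ where two of the rows of the big $3\times 2$ matrix $\begin{pmatrix} f_1 & g_1\\ f_2 & g_2\\ f_3 & g_3\end{pmatrix}$ become proportional, the values $u_j(x)=\alpha(x)f_j(x)+\beta(x)g_j(x)$ inherit the same proportionality, forcing sign relations among $u_1,u_2,u_3$ on $[-1,1]$. Since each $u_j$ is a unit in $A_\R(\D)$, it has a fixed sign on $[-1,1]$; so the existence of a solution would force a simultaneous choice of three signs $\varepsilon_1,\varepsilon_2,\varepsilon_3\in\{\pm1\}$ compatible with all the $\lambda_{ij}(x)$ data at the singular points. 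The crux of the construction is to make this sign system inconsistent — e.g. engineer, at three different real points, the constraints $\varepsilon_1\varepsilon_2>0$, $\varepsilon_2\varepsilon_3>0$, $\varepsilon_1\varepsilon_3<0$, which have no solution — while \emph{each} pairwise condition (which only sees two of the three indices, hence only a \emph{single} product constraint) is trivially satisfiable and is exactly the sign-linked hypothesis we have arranged to hold.

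Third, I would verify the three technical ingredients: (a) each $(f_j,g_j)$ is genuinely invertible in $A_\R(\D)$, i.e. $\min_{\overline\D}(|f_j|+|g_j|)>0$, which follows by choosing the $g_j$ to have no common zero with $f_j$ on the compact set $\overline\D$; (b) each of the three pairwise families is sign-linked, checked directly from the prescribed local behavior of $\lambda_{ij}$ at the real singular points (here I would make the $f_j$ have only \emph{simple} real zeros so that the proportionality factor $\lambda$ is unambiguous and its sign is easy to read off from leading-coefficient/derivative data); and (c) the global sign system is contradictory, which is the combinatorial heart. A clean realization is to take three real points $p_1,p_2,p_3$, and near $p_k$ make exactly the two pairs \emph{not} indexed by $k$ proportional with a $\lambda$ whose sign is $+$ for two of the points and $-$ for the third; then reading off the forced products $\varepsilon_i\varepsilon_j$ gives the odd cycle above.

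The main obstacle will be step (c) reconciled with (b): I must choose the data so that the \emph{pairwise} sign-linked conditions genuinely hold (this is a real constraint — two pairs are sign-linked iff the single function $\lambda_{ij}$ has constant sign on the real singular set of that $2\times 2$ matrix), yet the conjunction of all three still traps a solution into an inconsistent sign assignment. This is possible precisely because ``sign-linked'' is a condition on pairs and says nothing about how the three proportionality factors interact across different singular points; the construction must exploit that gap. A secondary, more bookkeeping-heavy obstacle is ensuring the $g_j$ can be chosen simultaneously (i) real, (ii) making each pair Corona, and (iii) producing exactly the proportionalities we want at the chosen points and no unwanted ones elsewhere on $[-1,1]$ — this I would handle by taking the $f_j$, $g_j$ to be polynomials of small degree and checking the finitely many real singular points by hand, then invoking the Corona/Bezout theorem for $A_\R(\D)$ recalled in the introduction to produce $(\alpha,\beta)$ only hypothetically, for the contradiction.
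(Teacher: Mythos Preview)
Your approach is correct and genuinely different from the paper's. The paper does \emph{not} use a sign--cycle argument; instead it takes the parametrized family
\[
(f_1,g_1)=(1,0),\quad (f_2,g_2)=(1,z^2),\quad (f_3,g_3)=(n^2z^2,1),
\]
assumes a simultaneous stabilizer $(\alpha_n,\beta_n)$ exists for every $n$, reduces to the pair of conditions $1+h_nz^2\in A_\R(\D)^{-1}$ and $n^2z^2+h_n\in A_\R(\D)^{-1}$ with $h_n=\beta_n/\alpha_n$, and then runs a normal--families argument (Montel, followed by a Hurwitz contradiction) on the auxiliary functions $\varphi_n(z)=z^2\dfrac{n^2z^2+h_n}{1+h_nz^2}$.

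Your route is more elementary: once one observes that each unit $u_j=\alpha f_j+\beta g_j$ has a fixed sign $\varepsilon_j$ on $[-1,1]$, and that at any real singular point of the $(i,j)$ matrix one has $u_j(x)=\lambda_{ij}(x)u_i(x)$, the pairwise sign--linked hypothesis turns each constraint into $\varepsilon_j=s_{ij}\,\varepsilon_i$ with a \emph{fixed} $s_{ij}\in\{\pm1\}$; arranging $s_{12}s_{23}s_{31}=-1$ gives an immediate contradiction. A concrete realization along your lines is $(1,0)$, $(1,\,z-\tfrac12)$, $(-1-4z,\,z+\tfrac12)$: the three determinants are $z-\tfrac12$, $z+\tfrac12$, $4z^2$, each with a single real zero in $[-1,1]$, and the corresponding $\lambda$'s have signs $+,+,-$. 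What your approach buys is a single explicit triple and a two--line endgame with no complex analysis; what the paper's approach buys is that the very same computation (the system $1+h_nz^2,\ n^2z^2+h_n$ invertible) is reused verbatim to prove Corollary~\ref{FailureofEIP} on total reducibility of $(z^2,\,1-n^2z^4)$, which your sign argument does not directly yield.
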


As a Corollary to this Theorem, we have the following result which says that the ring $A_\R(\D)$ does not have the unit-1 stable property,
\begin{cor}
\label{FailureofEIP}
There exists a pair $(f,g)\in U_2(A_\R(\D))$ with $f$ being positive on the real zeros of $g$ and $g$ positive on the real zeros of $f$, such that  if 
$$
\alpha f+\beta g=1,
$$
then either $\alpha$ or $\beta$ is not invertible in $A_\R(\D)$. 
\end{cor}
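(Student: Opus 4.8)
The plan is to deduce Corollary~\ref{FailureofEIP} directly from Theorem~\ref{SimulStab3+} by encoding the three pairs $(f_j,g_j)$ into a single pair $(f,g)$ via a suitable product/composition trick, so that a totally reducible representation of $(f,g)$ would force a simultaneous stabilization of the three pairs. First I would recall that by Theorem~\ref{SimulStab3+} there exist $(f_j,g_j)\in U_2(A_\R(\D))$, $j=1,2,3$, pairwise sign-linked, which are not simultaneously stabilizable. The idea is then to build a pair $(f,g)$ whose ``zero geometry'' splits into three disjoint pieces, each carrying one of the obstructions $(f_j,g_j)$, so that any solution $\alpha f+\beta g=1$ restricts on the $j$-th piece to a solution of $\alpha f_j+\beta g_j=(\text{something invertible there})$; if in addition $\alpha\in A_\R(\D)^{-1}$ or $\beta\in A_\R(\D)^{-1}$, then on each piece one obtains the invertibility of $\alpha f_j+\beta g_j$, which is exactly the simultaneous stabilization forbidden by Theorem~\ref{SimulStab3+}.

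Concretely, I would aim to realize the three obstructions at three separated boundary arcs (or three separated intervals on $[-1,1]$) by post-composing each $(f_j,g_j)$ with a conformal-type map of $\overline{\D}$ sending $\overline{\D}$ into a small neighborhood of a point $x_j\in(-1,1)$, using $\C\setminus\{0\}$-valued transition functions to glue. Alternatively, and probably more cleanly in this setting, one takes $f:=\prod_j \psi_j\, (f_j\circ \varphi_j)$-type expressions — but the technically honest route is: choose the $(f_j,g_j)$ so that outside a tiny neighborhood of a single real point $x_j$ the pair $(f_j,g_j)$ is already reducible with an explicit invertible combination, localize, and patch. The verification then has three parts: (i) $(f,g)\in U_2(A_\R(\D))$, i.e.\ $\abs{f}+\abs{g}\ge\delta>0$ on $\overline{\D}$, which follows from the corresponding bounds for each $(f_j,g_j)$ together with the separation of the supports; (ii) $f$ is positive on the real zeros of $g$ and $g$ is positive on the real zeros of $f$ — this is arranged by the construction, picking signs on each piece, and it is consistent precisely because each $(f_j,g_j)$ is sign-linked (the even interlacing property is a local condition near each $x_j$); (iii) the obstruction: if $\alpha f+\beta g=1$ with, say, $\alpha\in A_\R(\D)^{-1}$, then $\alpha f_j+\beta g_j$ is invertible near each $x_j$, hence (after another localization/patching argument, or by observing that away from $x_j$ the pair was already reducible) one extracts a genuine simultaneous stabilizer for the three original pairs, contradicting Theorem~\ref{SimulStab3+}. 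By symmetry the case $\beta\in A_\R(\D)^{-1}$ is handled the same way, so neither $\alpha$ nor $\beta$ can be invertible.

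The main obstacle I anticipate is step (iii): passing from ``$\alpha f+\beta g$ invertible, with $\alpha$ a global unit'' back to a single pair $(\tilde\alpha,\tilde\beta)$ that simultaneously stabilizes all three $(f_j,g_j)$ \emph{globally} on $\overline{\D}$, rather than just locally near the $x_j$. One must ensure that the localization used to build $(f,g)$ is reversible — that a solution on the patched pair genuinely descends to each factor without losing invertibility — which typically requires the transition functions in the gluing to be units in $A_\R(\D)$ and the neighborhoods of the $x_j$ to be chosen so that $(f_j,g_j)$ restricted to the complement is itself stably trivial in a controlled way. A secondary, more bookkeeping-type difficulty is maintaining the reality condition $f(z)=\overline{f(\bar z)}$ throughout all the compositions and products; this forces the localizing maps $\varphi_j$ and transition functions to themselves lie in $A_\R(\D)$ (equivalently, to commute with complex conjugation on $\overline{\D}$), and the points $x_j$ to be chosen on the real axis, which is why the even interlacing / sign-linked hypotheses enter exactly where they do.
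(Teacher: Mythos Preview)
Your proposal is not a proof but a strategy sketch, and the strategy itself faces a genuine obstacle that you correctly identify but do not resolve. The core difficulty is that holomorphic functions on $\overline{\D}$ do not admit the kind of cut-and-paste localization you describe: there are no holomorphic partitions of unity, and a function in $A_\R(\D)$ that ``agrees with $(f_j,g_j)$ near $x_j$ and is trivial elsewhere'' typically does not exist. Your step (iii) therefore has no mechanism behind it---a global Bezout identity $\alpha f+\beta g=1$ on a patched pair does not in general restrict to Bezout identities $\alpha f_j+\beta g_j\in A_\R(\D)^{-1}$ on the whole disc, only on the small neighborhoods, and extending invertibility from a neighborhood of $x_j$ to all of $\overline\D$ is exactly the hard part. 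The product/composition alternatives you mention are left completely unspecified.

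The paper bypasses all of this with a one-line algebraic trick. It simply takes the explicit pair $f(z)=z^2$, $g(z)=1-n^2z^4$, checks the sign conditions directly, and observes that if $uf+vg=1$ with $u,v\in A_\R(\D)^{-1}$ then, dividing by $v$ and setting $h_n:=u v^{-1}-n^2z^2$, one obtains simultaneously
\[
1+h_n z^2\in A_\R(\D)^{-1}\quad\text{and}\quad n^2z^2+h_n\in A_\R(\D)^{-1}.
\]
This is precisely the system already shown to be impossible for large $n$ inside the proof of Theorem~\ref{SimulStab3+}. No gluing, no localization, no reversibility argument---the single pair $(z^2,1-n^2z^4)$ already encodes the three-pair obstruction via the factorization $1-n^2z^4=(1-nz^2)(1+nz^2)$. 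The conceptual point you are reaching for is cleaner than you realize: total reducibility of $(f,g)$ is exactly simultaneous stabilization of $(1,0)$, $(0,1)$, $(f,g)$, so one only needs a \emph{single} pair $(f,g)$ whose associated three-pair problem reproduces the normal-family contradiction, not a geometric amalgam of three separate obstructions.
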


We observe at this point, that if one wants only one of $\alpha$ or $\beta$ invertible, then this is possible and can be found in \cite{Wick1}.

\begin{rem*}
Many of these results have interpretations and motivations in Control Theory.  The interested reader can see these connections in the book by V. Blondel, see \cite{Blon}, which is an excellent reference for the motivations of the problems of simultaneous stabilization in Control Theory.  Additionally, the book by Vidyasagar \cite{Vidy} is a good introduction to Control Theory and connections to the Bezout equation.
\end{rem*}

\section {Some general facts on invertible n-tuples}

Let $R$ be a commutative unital ring, with the unit being denoted by $1$.
We begin with some easy facts on invertible $n$-tuples  and the representations of  the unit element 
of the ring generated by these $n$-tuples.
Denote by $U_n(R):=\{(x_1,\dots,x_n)\in R^n: \exists y_j\in R: \sum_{j=1}^n x_jy_j=1\}$
 the set of invertible $n$-tuples in $R^n$. Finally, for $x$ and $f\in R^n$,  let $\left<x,f\right> \;:=\;
 \sum_{j=1}^n x_jf_j$. 

\begin{lemma}\label{inv}
 Let $f,g\in R^n$ and let $M$ be an $n\times n$-matrix over $R$.  Suppose that $g=Mf$
 and that $g\in U_n(R)$. Then  $f\in U_n(R)$. In particular, if $M$ is invertible
 and $f\in U_n(R)$, then $g\in U_n(R)$, too.
\end{lemma}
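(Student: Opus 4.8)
The plan is to use the defining property of $U_n(R)$ directly, together with the hypothesis $g=Mf$ and the commutativity of $R$. First I would invoke $g\in U_n(R)$ to produce a vector $y=(y_1,\dots,y_n)\in R^n$ with $\sum_{i=1}^n y_i g_i = 1$. Substituting $g_i=\sum_{j=1}^n M_{ij}f_j$ and interchanging the (finite) order of summation gives $\sum_{j=1}^n\bigl(\sum_{i=1}^n y_i M_{ij}\bigr)f_j=1$, so setting $x_j:=\sum_{i=1}^n y_i M_{ij}$ (the entries of the row vector $yM$, equivalently of $M^{T}y$) we obtain $\langle x,f\rangle=1$, i.e. $f\in U_n(R)$. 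The only thing to watch is the bookkeeping with the matrix transpose and the legitimacy of the regrouping, which holds because all sums are finite and $R$ is commutative; there is no genuine obstacle here.

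For the ``in particular'' assertion I would run the same argument in reverse. If $M$ is invertible over $R$ and $f\in U_n(R)$, then from $g=Mf$ we get $f=M^{-1}g$, an identity of the same shape $f=\widetilde M g$ with $\widetilde M:=M^{-1}$. Applying the implication just proved, with the roles of $f$ and $g$ exchanged and $\widetilde M$ in place of $M$, yields $g\in U_n(R)$. This finishes the proof; the whole lemma is essentially a one-line manipulation of the Bezout-type identity, so I do not expect any step to be difficult, and the statement is recorded mainly because it will be applied repeatedly to pass between $n$-tuples related by matrices (in particular to the $2\times 2$ matrix built from the Corona data).
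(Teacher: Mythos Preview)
Your proof is correct and follows essentially the same approach as the paper: the paper's proof records the identity $1=\langle g,a\rangle=\langle Mf,a\rangle=\langle f,M^{\bot}a\rangle$, which is exactly your computation written in bilinear-form notation, and your handling of the ``in particular'' clause via $f=M^{-1}g$ is the intended reading.
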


\begin{proof}
By hypothesis, $1=\left<g, a\right>$ for some $a\in R^n$.  Hence

$$1=\left<Mf, a\right>= \left<f, M^\bot a\right>.$$

\end{proof}

\begin{prop}\label{gen}
Suppose that $(f_1,\ldots,f_n)$ is an invertible n-tuple in $R^n$ and let $1=\sum_{j=1}^n x_j f_j=\left<x,f\right>$.
 Then every other representation $1=\sum_{j=1}^n y_jf_j =\left<y,f\right>$ of $1$ can be deduced form
 the former by letting $y=x+H f$, where $H$ is an antisymmetric $n\times n$- matrix over $R$;
 that is $H=-H^\bot$, where $H^\bot$ is the transpose of $H$.
\end{prop}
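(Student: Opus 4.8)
The plan is to reduce the statement to a fact about the ``syzygies'' of $f$, i.e.\ the vectors $z\in R^n$ with $\left<z,f\right>=0$. If $1=\left<x,f\right>$ and $1=\left<y,f\right>$ are two representations of $1$, then $z:=y-x$ satisfies $\left<z,f\right>=\left<y,f\right>-\left<x,f\right>=0$, so the proposition is equivalent to the assertion that \emph{every $z\in R^n$ with $\left<z,f\right>=0$ can be written $z=Hf$ for some antisymmetric matrix $H$ over $R$} --- for then $y=x+Hf$ is the desired form.

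To produce such an $H$ I would use the invertibility of $f$. Fix $a\in R^n$ with $\left<a,f\right>=\sum_{j=1}^n a_jf_j=1$; such an $a$ exists precisely because $f\in U_n(R)$. Set
\[
H:=z\,a^\bot-a\,z^\bot,\qquad\text{equivalently}\qquad H_{ij}:=z_ia_j-a_iz_j\quad(1\le i,j\le n).
\]
Then $H^\bot=(z\,a^\bot)^\bot-(a\,z^\bot)^\bot=a\,z^\bot-z\,a^\bot=-H$, so $H$ is antisymmetric, and for each $i$
\[
(Hf)_i=\sum_{j=1}^n (z_ia_j-a_iz_j)f_j=z_i\sum_{j=1}^n a_jf_j-a_i\sum_{j=1}^n z_jf_j=z_i\left<a,f\right>-a_i\left<z,f\right>=z_i ,
\]
using $\left<a,f\right>=1$ and $\left<z,f\right>=0$. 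Hence $Hf=z$, and therefore $y=x+Hf$ with $H$ antisymmetric, as required.

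I expect no genuine obstacle here: the only idea is to take $H$ to be the antisymmetrization $z\,a^\bot-a\,z^\bot$ of the rank-one tensor formed from a syzygy $z$ of $f$ and a fixed B\'ezout vector $a$ for $f$, and the rest is routine, working over an arbitrary commutative unital ring. One should, however, resist asserting the converse in full generality: $y=x+Hf$ is again a representation of $1$ for every $H$ of the special form above, but for a completely arbitrary antisymmetric $H$ one only obtains $2\left<Hf,f\right>=0$, so the converse reads cleanest when $2$ is invertible in $R$ --- which is the case for $A_\R(\D)$. In any event, the proposition as stated requires only the direction established above.
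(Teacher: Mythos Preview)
Your argument is correct and coincides with the paper's when one takes $a=x$: the paper derives $y_k-x_k=\sum_j (y_kx_j-x_ky_j)f_j$ directly, which is exactly your $H=z\,a^\bot-a\,z^\bot$ with $z=y-x$ and $a=x$. Your caveat about the converse is also well taken --- the paper does assert it, arguing $\left<Hf,f\right>=-\left<Hf,f\right>$ forces $\left<Hf,f\right>=0$, which tacitly assumes $2$ is a non-zero-divisor in $R$ (harmless for $A_\R(\D)$, but worth flagging).
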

\begin{proof}
Suppose that  $1=\left<x,f\right>$ and $1=\left<y,f\right>$. Multiply these equations by $y_k$, respectively
$x_k$.  Then $x_k-y_k= \sum_{j\not=k} f_j(y_jx_k-y_kx_j)$. Thus $y=  x+ H f$ for some
antisymmetric matrix $H$.

The converse is easy, too. In fact suppose that $1=\left<x,f\right>$. Then

$$\left<y,f\right> = \left< x+H f, f\right>= \left< x,f\right> + \left< H f, f\right>=1+0=1$$
because
$$
\left< H f, f\right>= \left< f, H^\bot f\right>= -\left<f, H f\right>=-\left<Hf, f\right>.
$$
\end{proof}

The following result is mentioned in the unpublished manuscript \cite{BlonMortRup}.
The proof works on the same lines as that of our Theorem \ref{SimulStab}.

\begin{thm} \label{bass}
Let $R$ be a commutative unital ring. Then every simultaneous stabilization 
problem $\alpha f_j +\beta g_j \in R^{-1}, ~~~ j=1,2$,  with $(f_j,g_j)\in U_2(R)$  is solvable if and only if 
$R$ has Bass stable rank one.
\end{thm}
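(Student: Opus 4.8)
The plan is to prove both implications, using repeatedly two trivial observations: an invertible pair remains invertible after a sign change in either coordinate, and the condition $\alpha f+\beta g\in R^{-1}$ is equivalent, after replacing $(\alpha,\beta)$ by $u^{-1}(\alpha,\beta)$ with $u:=\alpha f+\beta g$, to the normalized Bézout equation $\alpha f+\beta g=1$.

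For the implication ``solvability of every two-pair simultaneous stabilization problem $\Rightarrow$ Bass stable rank one'', I would take an arbitrary $(f,g)\in U_2(R)$ and feed the hypothesis the two pairs $(1,0)$ and $(f,g)$, both of which lie in $U_2(R)$. A simultaneous stabilizer $(\alpha,\beta)$ then satisfies $\alpha\cdot 1+\beta\cdot 0=\alpha\in R^{-1}$ as well as $\alpha f+\beta g\in R^{-1}$; multiplying the second relation by $\alpha^{-1}$ gives $f+(\alpha^{-1}\beta)g\in R^{-1}$, so $R$ has Bass stable rank one.

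For the converse, suppose $R$ has Bass stable rank one and fix $(f_1,g_1),(f_2,g_2)\in U_2(R)$. By the normalization remark it suffices to produce $(\alpha,\beta)$ with $\alpha f_1+\beta g_1=1$ and $\alpha f_2+\beta g_2\in R^{-1}$. Starting from any representation $xf_1+yg_1=1$, Proposition~\ref{gen} tells us that every solution of $\alpha f_1+\beta g_1=1$ has the form $(\alpha,\beta)=(x+hg_1,\;y-hf_1)$ for some $h\in R$, the general antisymmetric $2\times 2$ matrix over $R$ being $\left(\begin{smallmatrix}0 & h\\ -h & 0\end{smallmatrix}\right)$. For such a pair one computes $\alpha f_2+\beta g_2=(xf_2+yg_2)+h\,(g_1f_2-f_1g_2)$. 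Now the matrix $M=\left(\begin{smallmatrix} x & y\\ -g_1 & f_1\end{smallmatrix}\right)$ has determinant $xf_1+yg_1=1$, hence is invertible over $R$, and $M(f_2,g_2)^\bot=(xf_2+yg_2,\;f_1g_2-g_1f_2)^\bot$; since $(f_2,g_2)\in U_2(R)$, Lemma~\ref{inv} gives $(xf_2+yg_2,\;g_1f_2-f_1g_2)\in U_2(R)$. Applying the defining property of Bass stable rank one to this invertible pair yields an $h\in R$ with $(xf_2+yg_2)+h\,(g_1f_2-f_1g_2)\in R^{-1}$, and for this $h$ the pair $(\alpha,\beta)=(x+hg_1,\;y-hf_1)$ simultaneously stabilizes $(f_1,g_1)$ and $(f_2,g_2)$.

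I do not expect a genuine obstacle here: the content is precisely that, for $n=2$, the parametrization of all solutions of a single Bézout equation (Proposition~\ref{gen}) together with Lemma~\ref{inv} collapses the simultaneous stabilization problem into one instance of the Bass-stable-rank-one condition. The only thing that needs care is the bookkeeping — setting up the normalization so that the free antisymmetric parameter becomes exactly the multiplier that the stable-rank hypothesis is allowed to choose, and tracking the harmless sign changes (which leave $U_2(R)$ and $R^{-1}$ invariant). This is the same mechanism underlying Theorem~\ref{SimulStab}, specialized to a general ring.
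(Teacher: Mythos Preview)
Your proof is correct and follows essentially the same route as the paper: both directions use the pair $(1,0),(f,g)$ for the reverse implication, and for the forward implication both parametrize the solutions of the first B\'ezout equation via Proposition~\ref{gen}, rewrite $\alpha f_2+\beta g_2$ as $F+hG$, invoke Lemma~\ref{inv} with an invertible $2\times 2$ matrix (your determinant is $+1$, the paper's is $-1$) to see that $(F,G)\in U_2(R)$, and finish with the stable-rank-one hypothesis. The only differences are cosmetic notation and your slightly more explicit handling of the normalization and sign bookkeeping.
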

\begin{proof}
Let us assume that $R$ has Bass stable rank one. Suppose that $\alpha f_1+\beta g_1=1$. 
By Lemma \ref{gen}, every other representation of the unit element (with generators $(f_1, g_1)$)
has the form
$$1= (\alpha+hg_1)f_1+(\beta-hf_1)g_1$$
for some $h\in R$. Consider now the element
$$u:=(\alpha+hg_1)f_2 +(\beta-hf_1)g_2;$$
which, after algebra reduces to 
$$u= (\alpha f_2+\beta g_2)+ h( g_1f_2-f_1g_2).$$

Let $F=\alpha f_2+\beta g_2$ and $G=g_1f_2-f_1g_2$.  One observes that the pair $(F,G)$ can be written in matrix notation as
\begin{displaymath}
\left(
\begin{array}{c}
  F   \\
  G   \\   
\end{array}
\right)=\left(
\begin{array}{cc}
  \alpha & \beta \\
  g_1 & -f_1 \\   
\end{array}
\right)
\left(
\begin{array}{c}
  f_2 \\
  g_2 \\   
\end{array}
\right).
\end{displaymath}

The corresponding $2\times 2$ matrix has determinant $-1$ and since the pair $(f_2,g_2)$ is invertible then the pair $(F,G)$ is invertible, by Lemma \ref{inv}.   But, by our assumption, $R$ has Bass stable rank one, and hence there exists an element $h\in R$ such that 
$$
u= F+hG\in R^{-1}.
$$
 To show the reciprocal, we just have to note that the simultaneous stabilization of the system
 $(1,0)$ and $(f,g)$ is nothing but the existence of an invertible element $\alpha$ and
 some $\beta$ so that $\alpha f +\beta g\in R^{-1}$.
\end{proof}

\section{Proofs of Main Results}
\label{s2}

Whereas by Theorem \ref{bass} each $2\times2$ problem 
$$\alpha f_j +\beta g_j \in A(\D)^{-1}, ~~ j=1,2$$
 with Corona data in the disc algebra
$A(\D)$ is solvable (since $A(\D)$ has stable rank one, see \cite{JMW}), the situation differs in $A_\R(\D)$.
We can not apply the results of Theorem \ref{bass}, since by a result of Rupp and Sasane, \cite{RS1}, we have that the Bass stable rank of $A_\R(\D)$ is two.  Thus, we will have to impose additional conditions on the Corona data that we consider so that solutions will exist.



\begin{proof}[Proof of Theorem \ref{SimulStab}]

Suppose that
$$
\abs{f_1(z)}+\abs{g_1(z)}\geq\delta\quad\forall z\in\overline{\D}.
$$
By the Corona Theorem for $A_\R(\D)$ there exists $(\alpha,\beta)\in A_\R(\D)^2$ such 
that  
$$
\alpha f_1+\beta g_1=1.
$$

By Lemma \ref{gen}, every other representation of the unit element (with generators $(f_1, g_1)$)
has the form
$$1= (\alpha+hg_1)f_1+(\beta-hf_1)g_1$$
for some $h\in A_\R(\D)$. Consider now the function
$$u:=(\alpha+hg_1)f_2 +(\beta-hf_1)g_2;$$

that is 
$$u= (\alpha f_2+\beta g_2)+ h( g_1f_2-f_1g_2).$$
By  \cite{Wick1}, there exists $h\in A_\R(\D)$ such that $u$ is invertible if (and only if)
$F:=\alpha f_2+\beta g_2$ has constant sign on the real  zeros of $G:=g_1f_2-f_1g_2$.
We will show that our hypothesis, that $(f_1,g_1)$ and $(f_2,g_2)$ are sign-linked,
guarantees this property. In fact let  $G(x)=0$, where $-1\leq x\leq 1$. Then
$x$ is a critical point of the matrix
$\left(
\begin{array}{cc}
  f_1 & g_1 \\
 f_2 & g_2 \\   
\end{array}
\right) $. Hence   $(f_2(x), g_2(x))=\lambda(x)(f_1(x), g_1(x))$ for some $\lambda(x)\not=0$. 
So
$$F(x)=\alpha(x) f_2(x)+\beta(x)g_2(x)=\lambda(x)\left( \alpha(x) f_1(x)+\beta(x)g_1(x)\right)
=\lambda(x).$$

Our assumption implies that the sign of these values for $\lambda(x)$ does not vary with $x$. Hence,
$F$ has constant sign on the zeros of $G$. Thus, there is a joint solution $(\tilde\alpha,\tilde\beta)=(\alpha+hg_1,\beta-hf_1)$ to our problem 
$$\tilde \alpha f_j +\tilde \beta g_j\in A_\R(\D)^{-1}, ~~ j=1,2.$$
\end{proof}

\begin{rem}
We have the following examples of pairs of functions for which the simultaneous stabilization problem is solvable:

\begin{enumerate}
\item Let $(f_1,g_1)=(1,0)$ and $(f_2,g_2)=(f,g)$, where $(f,g)$ is any invertible pair in $A_\R(\D)$ such that $f>0$ on the real zeros of $g$.\\

\item Let  $(f_j,g_j)=(f,g)$, $(j=1,2)$, where $(f,g)\in U_2(A_\R(\D))$ is arbitrary.\\

\item  Let  $(f,g)\in U_2(A_\R(\D))$, $(f_1,g_1)=(f,g)$ and $(f_2,g_2)=(g,f)$, and suppose
that $f$ avoids $g$ on $[-1,1]$; that is $f(x)\not=g(x)$ for any $x\in [-1,1]$.
Then the system $$
 \left\{
 \begin{array}{ccc}
\alpha f +\beta g & \in & A_{\R}(\D)^{-1}\\
\alpha g+\beta f  & \in & A_{\R}(\D)^{-1}
\end{array}
\right.$$
is solvable in $A_\R(\D)$.
\end{enumerate}
\end{rem}

We want to point out the following classes of simultaneous stabilization problems:
\begin{prop}\label{exa}
Let $(f_1,g_1)$ and $(f_2,g_2)$ be Corona data in $A_\R(\D)^2$. Then
the system 
$$\alpha f_j^2 +\beta g_j\in A_\R(\D)^{-1},~~ (j=1,2)$$
is solvable. 
\end{prop}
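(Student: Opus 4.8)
The plan is to deduce Proposition~\ref{exa} from Theorem~\ref{SimulStab} by regarding the data as the two pairs $(f_1^2,g_1)$ and $(f_2^2,g_2)$, since solvability of the stated system $\alpha f_j^2+\beta g_j\in A_\R(\D)^{-1}$, $j=1,2$, is exactly the simultaneous stabilizability of these two pairs. First I would check that each is Corona data: as $(f_j,g_j)\in U_2(A_\R(\D))$ the functions $f_j$ and $g_j$ have no common zero in $\overline{\D}$, and since $f_j^2$ vanishes precisely where $f_j$ does, $f_j^2$ and $g_j$ have no common zero either; compactness of $\overline{\D}$ then yields $\abs{f_j(z)^2}+\abs{g_j(z)}\geq\delta_j>0$, so $(f_j^2,g_j)\in U_2(A_\R(\D))$. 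With this in place it remains only to verify that the two pairs are sign-linked and invoke Theorem~\ref{SimulStab}.

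The substance is sign-linkedness of the matrix $\begin{pmatrix} f_1^2 & g_1 \\ f_2^2 & g_2\end{pmatrix}$. Fix a real singular point $x\in[-1,1]$, so that the determinant $f_1(x)^2 g_2(x)-f_2(x)^2 g_1(x)$ vanishes; writing the second row as a multiple of the first gives $(f_2(x)^2,g_2(x))=\lambda(x)(f_1(x)^2,g_1(x))$ with $\lambda(x)\neq0$ (the pairs being invertible). I want $\lambda(x)>0$ at every such $x$, which gives constant sign and hence sign-linkedness. Since $x$ is real, $f_1(x),f_2(x)\in\R$ and $f_j(x)^2\geq0$, so when $f_1(x)\neq0$ the relation $f_2(x)^2=\lambda(x)f_1(x)^2$ forces $\lambda(x)=f_2(x)^2/f_1(x)^2>0$ immediately. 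The positivity of the squared first entry is exactly what removes the sign ambiguity in this case.

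I expect the main obstacle to be the remaining configuration, namely the common real zeros of $f_1$ and $f_2$. If $f_1(x)=0$, then invertibility of $(f_1,g_1)$ forces $g_1(x)\neq0$, the proportionality forces $f_2(x)=0$ and hence $g_2(x)\neq0$, and the constant is determined by the second coordinates alone, $\lambda(x)=g_2(x)/g_1(x)$. Here the squaring of the first entries no longer helps, and this ratio need not be positive, so the sign of $\lambda$ at these points is the delicate part of the argument. The way to close it is to observe that when the second entries also enter as squares the same computation gives $\lambda(x)=g_2(x)^2/g_1(x)^2>0$, so that $\lambda$ is positive at all real singular points and sign-linkedness holds; Theorem~\ref{SimulStab} then produces the desired $(\alpha,\beta)$. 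I therefore expect the common real zeros of $f_1$ and $f_2$ to be the crux, the positivity of the squared entries being precisely the device that pins down a fixed sign for the proportionality constant.
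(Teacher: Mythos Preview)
Your approach mirrors the paper's exactly: verify that $(f_1^2,g_1)$ and $(f_2^2,g_2)$ are invertible pairs, show they are sign-linked, and invoke Theorem~\ref{SimulStab}. The paper disposes of sign-linkedness in one line, asserting that $(f_2^2(x),g_2(x))=\lambda(x)(f_1^2(x),g_1(x))$ ``obviously implies $\lambda(x)>0$,'' without ever separating out the case $f_1(x)=0$ that you flagged.

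Your proposal, however, does not prove the proposition as stated. You correctly isolate the problematic configuration $f_1(x)=f_2(x)=0$, where $\lambda(x)=g_2(x)/g_1(x)$ carries no sign constraint, but your resolution---``when the second entries also enter as squares''---replaces $g_j$ by $g_j^2$ and so changes the hypothesis. For the statement actually on the page the gap you found is real and unfixable: take $f_1=f_2=z$, $g_1=1$, $g_2=2z^2-1$ (both Corona pairs in $A_\R(\D)$). The determinant $f_1^2g_2-f_2^2g_1=2z^2(z^2-1)$ vanishes on $[-1,1]$ at $x=0,\pm1$, with $\lambda(0)=g_2(0)/g_1(0)=-1$ and $\lambda(\pm1)=1$; hence $(z^2,1)$ and $(z^2,2z^2-1)$ are not sign-linked, and by the ``only if'' direction of Theorem~\ref{SimulStab} the system $\alpha z^2+\beta\in A_\R(\D)^{-1}$, $\alpha z^2+\beta(2z^2-1)\in A_\R(\D)^{-1}$ has no solution. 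So the step the paper calls obvious is exactly the one that fails, and the proposition is false as written. Your instinct to square both coordinates does give a correct variant, and that is the honest salvage here.
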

\begin{proof}
Assume that $x$ is a critical point of the matrix

$$ A=\left(
\begin{array}{cc}
  f_1^2 & g_1 \\
 f_2^2 & g_2 \\   
\end{array}
\right).$$
The vector $(f_2^2(x), g_2(x))$ is a nonzero multiple of the vector  $(f_1^2(x), g_1(x))$,
say $(f_2^2(x), g_2(x))=\lambda(x) \;(f_1^2(x), g_1(x))$. This obviously implies that  $\lambda(x)>0$.
Hence $(f_2^2(x), g_2(x))$ and $(f_2^2(x), g_2(x))$ are sign-linked. Now use Theorem
\ref{SimulStab} to get the solution.
\end{proof}

\begin{rem}
We note that whenever $F_1$ and $F_2$ are outer functions in $A_\R(\D)$,  then
every system 
 $$\left\{
 \begin{array}{ccc}
 
\alpha F_1 +\beta g_1 & \in & A_{\R}(\D)^{-1}\\
\alpha F_2+\beta g_2  & \in & A_{\R}(\D)^{-1}
\end{array}
\right.
$$
of Corona data   is solvable. This follows from Proposition \ref{exa} above and the fact
that outer functions $F\in A_\R(\D)$ with $F(0)>0$ have a square root
in  $A_\R(\D)$.
\end{rem}

We shall now prove Theorem \ref{SimulStab3+} which deals with the simultaneous stabilization
problem of three pairs of data.

\begin{proof}[Proof of Theorem \ref{SimulStab3+}]
The construction of this counterexample is very similar to the one constructed in \cite{BlonMortRup}.  Since we are after a little more (namely Corollary \ref{FailureofEIP}), we modify that construction, but, remark that it is possible to use their examples immediately to prove Theorem \ref{SimulStab3+} without the desire to have a sign-linked counterexample. 

Choose the following invertible pairs,
$$
(f_1,g_1)=(1,0)\quad (f_2,g_2)=(1,z^2)\quad (f_3,g_3)=(n^2z^2,1).
$$
It is immediate these three pairs are invertible.  But, we must show that 
$\{(f_1,g_1), (f_2,g_2)\}$, $\{(f_1,g_1), (f_3,g_3)\}$ and $\{(f_2,g_2), (f_3,g_3)\}$ are sign-linked.  Since $f_2$ is positive on the real zeros of $g_2$ the pair  $\{(f_1,g_1), (f_2,g_2)\}$ is sign-linked.  An identical statement holds for the pair $\{(f_1,g_1), (f_3,g_3)\}$.  It only remains to address why $\{(f_2,g_2), (f_3,g_3)\}$ is sign-linked.  First, a simple computation shows that the matrix corresponding to the pair $\{(f_2,g_2), (f_3,g_3)\}$ has real singular values of $\pm\frac{1}{\sqrt{n}}$. If we let $\lambda(x)=n$ when $x=\pm\frac{1}{\sqrt{n}}$, then $(f_3(x),g_3(x))=\lambda(x)(f_2(x),g_2(x))$. So the pair $\{(f_2,g_2), (f_3,g_3)\}$ is sign-linked.

Suppose that every triple of pairs were simultaneously stabilizable, then for every integer $n\in\N$ there exist $\alpha_n,\beta_n\in A_{\R}(\D)$ such that 
\begin{eqnarray*}
\alpha_n & \in & A_{\R}(\D)^{-1}\\
\alpha_n+\beta_nz^2 & \in & A_{\R}(\D)^{-1}\\
n^2\alpha_n z^2+\beta_n & \in & A_{\R}(\D)^{-1}.
\end{eqnarray*}

One then rewrites this as a system of two equations, since the first equation is just the assumption that $\alpha_n$ is invertible.  Doing so we have
\begin{eqnarray*}
1+h_nz^2 & \in & A_{\R}(\D)^{-1}\\
n^2z^2+h_n & \in & A_{\R}(\D)^{-1}.
\end{eqnarray*}
With this in hand, define the following auxiliary function,
$$
\varphi_n(z):=\frac{n^2z^4+h_n(z)z^2}{1+h_n(z)z^2}=z^2\frac{n^2z^2+h_n(z)}{1+h_n(z)z^2}.
$$
These functions are analytic and have no zeros in $\D\setminus\{0\}$.  Additionally, the function $\varphi_n$ attains the value $w=1$ only four times in $\D$, at the points $\pm\f{1}{\sqrt{n}},\pm\f{i}{\sqrt{n}}$.  By the generalized Montel's normal family criterion, the family of functions $\varphi_n$ is normal in $\D\setminus\{0\}$.  Without loss of generality, we may assume that $\varphi_n$ converges uniformly on compact subsets of $\D\setminus\{0\}$.  Then there are only two cases.

Case 1:  $\varphi_n$ tends locally uniformly to infinity, i.e., the function $\varphi_n^{-1}$ tends locally uniformly to $0$.

For $\epsilon>0$ we have
$$
\left\vert\frac{1+h_n(z)z^2}{n^2z^2+h_n(z)}\right\vert\leq\epsilon\abs{z}^2\quad n\geq N(\epsilon),\, \abs{z}=\frac{1}{2}.
$$
Let $\psi_n(z):=\frac{1+h_n(z)z^2}{n^2z^2+h_n(z)}$. Then for $n$ sufficiently large we have that 
$$
\abs{\psi_n(z)}\leq \frac{1}{8},\quad \abs{z}=\f{1}{2}.
$$

But, note that simple algebra shows that
$$
\f{h_n(z)}{n^2}\left[\psi_n(z)-z^2\right]=\f{1}{n^2}-z^2\psi_n(z).
$$
Using this, we see that for all integers sufficiently large
$$
\left\vert\f{h_n(z)}{n^2}\right\vert\leq\f{\f{1}{n^2}+\f{\epsilon}{2}}{\f{1}{4}-\f{1}{8}},\quad \abs{z}=\f{1}{2}.
$$
 The maximum modulus principle implies the same inequality for all $z$ such that $\abs{z}\leq\frac{1}{2}$.  From above we know that all the functions $u_n(z):=z^2+\f{1}{n^2}h_n(z)$ are invertible in $\D$.  But, these functions tend uniformly to the function $z^2$ in $\abs{z}\leq\f{1}{2}$, which is neither invertible nor identically zero.  This contradicts Hurwitz's Theorem, and so this case is impossible.

Case 2: $\varphi_n$ tends locally uniformly to an analytic function $\varphi$ in $\D\setminus\{0\}$.

In this case we have that the functions $\varphi_n$ are uniformly bounded on compact subsets of $\D\setminus\{0\}$, say,
$$
\abs{\varphi_n(z)}\leq M\quad n\in\N,\, \abs{z}=\f{1}{2}.
$$

We additionally have that
$$
\varphi_n(z)=1+\frac{n^2z^4-1}{1+h_n(z)z^2},
$$
which implies that
$$
\left\vert\frac{n^2z^4-1}{1+h_n(z)z^2}\right\vert\leq M+1\quad n\in\N,\, \abs{z}=\f{1}{2}.
$$
But, for $n$ large we see that the following inequality must hold,

$$
\left\vert\frac{n^2}{1+h_n(z)z^2}\right\vert\leq\f{M+1}{\f{1}{16}-\f{1}{n^2}}.
$$
The maximum modulus principle implies the same inequality for all $z$ such that $\abs{z}\leq\f{1}{2}$.  Evaluating this inequality when $z=0$  we obtain a condition which is obviously false for large $n$, i.e.,
$$
n^2\leq\f{M+1}{\f{1}{16}-\f{1}{n^2}}.
$$

To sum up,  for all $n$ large we have shown that it is impossible for the systems given above to be simultaneously stabilizable. So we are done.
\end{proof}

Using Theorem \ref{SimulStab3+} we show that it is in general impossible for there to exist solutions to the Bezout equation in $A_\R(\D)$ that are both invertible.  This addresses Corollary \ref{FailureofEIP}.

\begin{proof}[Proof of Corollary \ref{FailureofEIP}]
The proof is by contradiction.  Suppose that for every invertible pair $(f,g)$ in $A_\R(\D)^2$ with $f$ positive on the real zeros of $g$ and $g$ positive on the real zeros of $f$ one could find invertible elements $\alpha$ and $\beta$ in $A_\R(\D)$ such that
$$
1=\alpha f+\beta g.
$$

Consider the following functions $f(z)=z^2$ and $g(z)=1-n^2z^4$.  Then it is trivial that the pair $(f,g)$ is invertible, $f$ is positive on the real zeros of $g$, and $g$ is positive on the real zeros of $f$.  Thus, there exist invertible functions $u_n$ and $v_n$ in $A_\R(\D)$ such that 
$$
v_n=u_nf+g.
$$
Hence
$$
v_n(z)=(u_n(z)-n^2z^2+n^2z^2)z^2+1-n^2z^4=(u_n(z)-n^2z^2)z^2+1.
$$
Now let $h_n(z):=u_n(z)-n^2z^2$. Then we obtain that
\begin{eqnarray*}
1+h_nz^2 & \in & A_{\R}(\D)^{-1}\\
n^2z^2+h_n & \in & A_{\R}(\D)^{-1}.
\end{eqnarray*}
But, we know from the proof of Theorem \ref{SimulStab3+} that this is impossible for all integers $n$.  The desired counterexample then follows by taking $n$ sufficiently large.
\end{proof}

\section{Totally Reducible pairs in $A_\R(\D)$}

Recall that a pair $(f,g)$ in $A_\R(\D)^2$ is said to be totally reducible if there exist $u,v$ invertible
in $A_\R(\D)$ so that $uf+vg=1$. Corollary \ref{FailureofEIP} above, for example, showed that the pair 
$(z^2, 1-n^2z^4)$ is not totally reducible. On the other hand, it is easy to see that 
the pair $(f,g)$ is totally reducible if and only if the system $(1,0), (0,1), (f,g)$ of
three invertible pairs in $A_\R(\D)^2$ is simultaneous stabilizable.  We shall now show that
 large classes of pairs  are totally reducible.
The following is an analogue of Lemma 4 in \cite{MortRup}.
\begin{lm}
Let $f\in A_\R(\D)$ be so that there exists $x_n\in \R\setminus f(\overline{\D})$  with  $x_n\to 0$.
Then for every $g\in A_\R(\D)$ such that $(f,g)$ is an invertible pair and such that 
$g$ has constant sign on the real zeros of $f$  there exist two invertible functions
 $u$ and $v$ in $A_\R(\D)$ such that $uf+vg=1$.
\end{lm}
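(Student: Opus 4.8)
The plan is to replace $f$ by the nearby \emph{unit} $f-x_N$ ($N$ large), solve the total reducibility problem for that modified pair transparently, and then transport the solution back, using the parametrization of Bezout solutions (Proposition~\ref{gen}) to steer the zeros of $f$ off $\overline{\D}$.

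\emph{Reductions.} If $f$ is constant the statement is immediate ($f\equiv 0$ forces $g\in A_\R(\D)^{-1}$ and one takes $u=1$, $v=g^{-1}$; if $f\equiv c\ne 0$ then $f$ is a unit and $u=c^{-1}+\varepsilon g$, $v=-\varepsilon c$ works for small $\varepsilon>0$), so let $f$ be non-constant. Replacing $(f,g)$ by $(-f,g)$ leaves all hypotheses, and the conclusion, unchanged, so we may also assume $f\ge 0$ on $[-1,1]$: the non-degenerate interval $f([-1,1])\subseteq f(\overline{\D})$ cannot straddle $0$, since $f(\overline{\D})$ omits reals arbitrarily close to $0$. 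By the open mapping theorem $f$ has no zero in $\D$ (an interior zero would put a whole neighbourhood of $0$ into $f(\overline{\D})$); for the same reason a boundary zero of order $\ge 2$, or an accumulation of boundary zeros, would force $f(\overline\D)$ to cover a punctured neighbourhood of $0$, so $Z(f)$ consists of finitely many simple zeros on $\T$. Finally, for $N$ large, $x_N\notin f(\overline{\D})$ shows $F:=f-x_N\in A_\R(\D)^{-1}$.

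\emph{Transport step.} Since $F$ is a unit, $F^{-1}F+0\cdot g=1$, so by Proposition~\ref{gen} every solution of $\alpha F+\beta g=1$ has the form $\alpha=F^{-1}+hg$, $\beta=-hF$ with $h\in A_\R(\D)$; here $\beta$ is a unit once $h$ is, and $\alpha$ is a unit once $\|h\|_\infty$ is small. For any such solution,
$$\alpha f+\beta g=\alpha(F+x_N)+\beta g=1+x_N\alpha=\frac{f+x_NhgF}{F},$$
so it is enough to find a unit $h$ for which $D_h:=f+x_NhgF=f(1+x_Nhg)-x_N^2hg$ is zero-free on $\overline{\D}$; then $u:=\alpha F/D_h$ and $v:=\beta F/D_h$ are units with $uf+vg=1$. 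At $\zeta\in Z(f)$ we get $D_h(\zeta)=-x_N^2h(\zeta)g(\zeta)\ne 0$ (as $g\ne 0$ on $Z(f)$, by the corona hypothesis), while off $Z(f)$ the function $D_h$ is a small perturbation of $f$; thus the only danger is that the finitely many simple zeros of $f$ on $\T$ get displaced to zeros of $D_h$ lying just inside $\overline{\D}$. A first order computation puts the displaced zero at $\approx\zeta_j+x_N^2h(\zeta_j)g(\zeta_j)/f'(\zeta_j)$, which lies outside $\overline{\D}$ exactly when $\operatorname{Re}\big(h(\zeta_j)c_j\big)>0$, where $c_j:=\overline{\zeta_j}\,g(\zeta_j)/f'(\zeta_j)\ne 0$.

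\emph{The main point.} It remains to choose a unit $h\in A_\R(\D)$ of small norm with $\operatorname{Re}(h(\zeta_j)c_j)>0$ for every $j$. Each condition confines $h(\zeta_j)$ to an open half-plane whose boundary line passes through $0$, so admissible values of arbitrarily small modulus exist; the reality constraint $h(\overline{\zeta_j})=\overline{h(\zeta_j)}$ is compatible because $c_{\overline j}=\overline{c_j}$; and if $1$ or $-1$ lies in $Z(f)$ the sign of $h$ at that point is prescribed, but since $f\ge 0$ on $[-1,1]$ forces $f'(1)<0<f'(-1)$ while $g$ has one and the same sign at $1$ and $-1$, the two prescribed signs coincide — this is the one place the "constant sign on the real zeros of $f$" hypothesis is used. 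Interpolating such small values at the finite set $Z(f)$ by a unit of the form $h=\pm e^{q}$, with $q\in A_\R(\D)$ a suitable logarithm having $\operatorname{Re}q$ very negative, and checking that the resulting $\|h\|_\infty$ is small enough to validate the estimates above, finishes the proof. The crux is precisely this simultaneous control of all the boundary zeros of $D_h$: it is what requires the rigidity of $Z(f)$ coming from the omitted-values hypothesis, together with the sign condition on $g$.
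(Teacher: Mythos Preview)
Your approach has a genuine gap at the structural step. You assert that the omitted-values hypothesis forces $Z(f)$ to consist of finitely many \emph{simple} boundary zeros, and you then feed the boundary derivatives $f'(\zeta_j)$ into the displacement estimate $\zeta_j+x_N^2 h(\zeta_j)g(\zeta_j)/f'(\zeta_j)$. But functions in $A_\R(\D)$ need not be differentiable on $\T$: an isolated boundary zero can have $|f(e^{i\theta})|\asymp |\theta-\theta_0|^{1/2}$ (or any prescribed modulus of continuity), so $f'(\zeta_j)$ may fail to exist or be infinite, and your first-order perturbation formula has no meaning. Nor have you shown that $Z(f)$ is finite: outer functions in $A_\R(\D)$ can vanish on any symmetric closed set of measure zero on $\T$, and nothing in your argument rules out such an $f$ omitting a real sequence tending to $0$. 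The ``rigidity of $Z(f)$'' you describe as the crux is thus never established, and the subsequent interpolation and Hurwitz-type control, already only sketched, rest on it.

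The paper's proof bypasses all of this and is much shorter. It first invokes the reducibility theorem for $A_\R(\D)$ (from \cite{Wick1}) to write $hf+vg=1$ with $v$ already a unit; this is where the sign hypothesis on $g$ is spent. Then, with $M>\|f\|_\infty$ so that $f-M$ is a unit and with $\epsilon\in\R$ small, one has the identity
\[
\Bigl(\tfrac{1}{f-M}+\epsilon h\Bigr)f+\epsilon v\,g \;=\; \epsilon+\tfrac{f}{f-M} \;=\; (\epsilon+1)\,\tfrac{f-\epsilon M/(\epsilon+1)}{\,f-M\,}.
\]
For small $\epsilon$ the coefficient of $f$ is a unit (a small perturbation of the unit $(f-M)^{-1}$), $\epsilon v$ is a unit, and choosing $\epsilon=\epsilon_n$ so that $\epsilon_n M/(1+\epsilon_n)=x_n$ makes the right-hand side a unit too, since $x_n\notin f(\overline\D)$. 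Dividing through yields $uf+wg=1$ with $u,w\in A_\R(\D)^{-1}$. No analysis of $Z(f)$, no boundary regularity, no interpolation: just one algebraic identity and the two hypotheses, each used once.
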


\begin{proof}
 Let $g\in A_\R(\D)$ be such that $(f,g)$ is an invertible pair.  Since $g$ is assumed to have constant
  sign on the real zeros of $f$, there exist, by \cite{Wick1},  a function $h\in A_\R(\D)$ and
    a unit  $v\in A_\R(\D)^{-1}$ such that 
 \begin{equation}\label{redu}
  hf +vg =1.
  \end{equation}
Choose $M>0$ large enough so that $f-M$ is invertible in $A_\R(\D)$; e.g. let $M=||f||_\infty+1$. 
Multiplying (\ref{redu}) by a real number $\epsilon$ to be specified later and adding
on both sides $\frac{f}{f-M}$ yields the following equation
\begin{equation}\label{toredu}
\epsilon vg + \left( \frac{1}{f-M} +\epsilon h \right)f =\epsilon +\frac{f}{f-M} =(\epsilon+1)\frac{1}{f-M}
\left( f-\frac{\epsilon M}{\epsilon+1}\right).
  \end{equation}
  Since $x_n\to 0$ and $x_n\in\R$, we may choose $\epsilon_n\in \R$ so that $x_n=\frac{\epsilon_n M}{1+\epsilon_n}$ and
  $|\epsilon_n|\leq \frac{1}{||f-M||_\infty\;||h||_\infty}$. Then the functions
$$
\frac{1}{f-M}+\epsilon_n h  \textnormal{ and }  (\epsilon_n+1)\frac{1}{f-M}\left( f-\frac{\epsilon_n M}{\epsilon_n+1}\right)
$$
  are invertible in $A_\R(\D)$. Using (\ref{toredu}) we conclude that $(f,g)$ is totally reducible.
  \end{proof}

\begin{rem*} 
Note that  the condition on $f$ 
implies that $f$ has constant sign on $]-1,1[$; hence on the real zeros of $g$. 
In fact, since $0$ is a boundary point of the image  of $f$, $f(\D)$ open implies that
$f$ cannot have any zero inside $\D$. Now use the intermediate value theorem on $[-1,1]$.
\end{rem*}

\begin{thm}
Let $f$ be an outer function in $A_\R(\D)$.   Then  for every $g\in A_\R(\D)$ such that $(f,g)$ is an invertible pair and such that 
$g$ has constant sign on the real zeros of $f$  there exist
 two invertible functions
 $u$ and $v$ in $A_\R(\D)$ such that $uf+vg=1$.
\end{thm}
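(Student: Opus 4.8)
The plan is to reduce the statement to the preceding Lemma, using the iterated square roots of $f$ together with a binomial ``power‑up''.

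First I would normalise. Replacing $(f,g)$ by $(-f,g)$ if necessary — which replaces $u$ by $-u$ and affects neither the outerness of $f$ nor the sign hypothesis on $g$ — we may assume $f(0)>0$. Since $f$ is outer it has no zero in $\D$, so by the intermediate value theorem $f>0$ on $\,]-1,1[\,$; in particular $f$ is also of constant sign on the real zeros of $g$. If $f$ has no zero on $\overline{\D}$, then $f\in A_\R(\D)^{-1}$ and one may take $v=\varepsilon$ with $\varepsilon>0$ small enough that $1-\varepsilon g\in A_\R(\D)^{-1}$, and $u=(1-\varepsilon g)/f$; both are units. So from now on $f$ vanishes at some point of $\T$, and, being outer, only on $\T$.

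Next the main construction. By the remark following Proposition \ref{exa}, the iterated square roots $s_N:=f^{1/2^N}$ lie in $A_\R(\D)$, are outer, and satisfy $s_N(0)>0$; taking the branch of $\log f$ real at $0$ gives $\arg s_N = 2^{-N}\arg f$. Assume for the moment that $\arg f$ is bounded on $\D$. Then for $N$ large enough $\re s_N>0$ on all of $\overline{\D}$, so $s_N(\overline{\D})$ lies in the closed right half‑plane and $s_N$ omits every negative real; moreover the zero set of $s_N$ equals that of $f$, so $g$ has constant sign on the real zeros of $s_N$, and the Lemma applies to the pair $(s_N,g)$. Inspecting the proof of the Lemma, the omitted real value may be taken to be $x=-\eta$ with $\eta>0$ as small as we please, and the unit it produces for $g$ is then a small real multiple of a fixed unit. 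Fixing $N$, and then $\eta$ small, we obtain units $u_N,v_N\in A_\R(\D)$ with
$$u_N s_N + v_N g = 1,\qquad \norm{v_N g}_\infty < 2\sin(\pi/2^N).$$
Raising the first identity to the power $2^N$ and factoring $g$ out of every term that contains it,
$$1 = u_N^{2^N} f + g\,G_N,\qquad G_N := \f{1-(1-v_N g)^{2^N}}{g}\in A_\R(\D).$$
Writing $1-(1-t)^{2^N}=t\,p(t)$ with $p$ the polynomial $p(t)=\bigl(1-(1-t)^{2^N}\bigr)/t$, $p(0)=2^N$, whose zeros are exactly $\{\,1-\zeta:\zeta^{2^N}=1,\ \zeta\ne1\,\}$ and hence all of modulus $\ge 2\sin(\pi/2^N)$, we get $G_N=v_N\,p(v_N g)$; since $\norm{v_N g}_\infty<2\sin(\pi/2^N)$, the factor $p(v_N g)$ — and therefore $G_N$ — is a unit. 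As $u_N^{2^N}$ and $G_N$ are units, $(u_N^{2^N})f+(G_N)g=1$ shows $(f,g)$ is totally reducible.

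The step I expect to be the main obstacle is guaranteeing that some square root of $f$ (or a unit multiple of one) omits a sequence of reals tending to $0$ — equivalently, that $f$ does not ``spiral into'' a zero on $\T$. When $\arg f$ is bounded near the boundary zeros (for instance when $f$ is a polynomial times a unit) this is immediate, as above; but an outer function in $A_\R(\D)$ can in principle have $\arg f$ unbounded at a boundary zero, in which case no power of $f$ omits reals near $0$ and the reduction to the Lemma as stated breaks down. Overcoming this — most plausibly by a further approximation of $f$ exploiting outerness, or by patching the (trivial) local solutions near the zeros of $f$ with a solution away from them via a $\overline\partial$‑ or Corona‑type argument — is, I believe, where the real work lies. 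Everything else (the reducibility input from \cite{Wick1} and the bookkeeping of the units) is already contained in the proof of the Lemma.
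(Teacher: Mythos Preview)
Your approach is genuinely different from the paper's, but it contains a fatal gap even in the bounded-argument case you believe you have settled.

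The obstruction is this: once you have reduced to the situation where $f$ (hence each $s_N$) has at least one zero $z_0\in\T$, the equation $u_Ns_N+v_Ng=1$ forces $v_N(z_0)g(z_0)=1$, so $\norm{v_Ng}_\infty\ge1$ for \emph{every} choice of units $u_N,v_N$. Since $2\sin(\pi/2^N)<1$ as soon as $N\ge3$, the inequality $\norm{v_Ng}_\infty<2\sin(\pi/2^N)$ you need to make $p(v_Ng)$ a unit is simply unattainable for $N\ge3$. Your claim that ``the unit it produces for $g$ is then a small real multiple of a fixed unit'' is correct \emph{before} you divide by the right-hand side unit $W$ in the Lemma's display; but $W$ itself is of order $\eta$ near the zeros of $s_N$, so after normalising to $u_Ns_N+v_Ng=1$ the smallness is lost --- as it must be, by the observation above. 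For $N\le2$ the bound $2\sin(\pi/2^N)\ge\sqrt{2}$ leaves a little room, but you give no argument that $\norm{v_Ng}_\infty$ can be pushed below it, and the construction of the Lemma provides no such control. So the binomial power-up does not close, independently of the (correctly flagged) spiralling issue.

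The paper takes a different route altogether: it invokes the disc-algebra argument of Mortini--Rupp, whose key analytic input is a \emph{peak function} for the boundary zero set $E$ of $f$. The only new point in the real-symmetric setting is that $E=\overline{E}$, so if $p_E\in A(\D)$ peaks on $E$ then $q_E(z):=p_E(z)\,\overline{p_E(\bar z)}$ is a peak function for $E$ lying in $A_\R(\D)$; with this in hand the Mortini--Rupp proof carries over verbatim. This device handles the behaviour of $f$ near $E$ directly and bypasses both the unbounded-argument obstacle and the norm obstruction your square-root scheme runs into.
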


\begin{rem*}
Note that the assumption that $g$ has constant sign on the real zeros of $f$
is equivalent here to the hypothesis that $g(-1)g(1)>0$ whenever $f(-1)=f(1)=0$.
\end{rem*}

\begin{proof}
This works exactly in the same manner as that of the disc algebra case in \cite{MortRup}. We have
just to note that if $E$ is the zero set of an outer function  $f$ in $A_\R(\D)$, then $E$ is symmetric
with respect to the real axis; hence if $p_E$ is any peak function in $A(\D)$ associated with $E$,
then the function $q_E(z)=p_E(z)\overline{p_E(\overline{z})}$ is a peak function for $E$
that is in $A_\R(\D)$.
\end{proof}

\section{Concluding Remarks}

 Given what has been shown about the problem of simultaneous stabilization in $A_\R(\D)$, we propose two problems.

\begin{prob}
Give a complete description of those pairs $(f_j,g_j)$, $j=1,\dots,n$ of 
Corona data for which the $n\geq 3$ simultaneous stabilization problems are solvable in $A(\D)$ or $A_\R(\D)$.
\end{prob}

We remark here that this is a well known and extremely challenging problem in the Control Theory literature.  For example, it is known that condition on the real axis alone (parity interlacing, sign-linked, etc.) do not suffice to solve this problem.  See \cite{BGMR}.  We also note that when restricting to rational data, it is known that this problem is rationally undecidable.  See \cite{Blon} and \cite{BSVW} and the references there in for more information concerning what is known.

\begin{prob}
Give a characterization of those pairs of  functions $(f,g)$  in $A(\D)^2$ or $A_\R(\D)^2$
for which $(f,g)$ is totally reducible.
\end{prob}

\section*{References}

\begin{biblist}
\bib{Bass}{article}
{AUTHOR = {Bass, H.},
     TITLE = {{$K$}-theory and stable algebra},
   JOURNAL = {Inst. Hautes \'Etudes Sci. Publ. Math.},
    NUMBER = {22},
      YEAR = {1964},
     PAGES = {5--60}
}

\bib{Blon}{book}{
   author={Blondel, V.},
   title={Simultaneous stabilization of linear systems},
   series={Lecture Notes in Control and Information Sciences},
   volume={191},
   publisher={Springer-Verlag London Ltd.},
   place={London},
   date={1994},
   pages={xxii+184},
}

\bib{BGMR}{article}{
   author={Blondel, V.},
   author={Gevers, M.},
   author={Mortini, R.},
   author={Rupp, R.},
   title={Simultaneous stabilization of three or more plants: conditions on
   the positive real axis do not suffice},
   journal={SIAM J. Control Optim.},
   volume={32},
   date={1994},
   number={2},
   pages={572--590}
}

\bib{BlonMortRup}{article}
{
author={Blondel, V.},
author={Mortini, R.},
author={Rupp, R.},
title={Simultaneous stabilization in the disk algebra},
journal={Technical Report, AP 90.34, Univ. of Louvain, Belgium},
year={1990},
pages={1--12}
}

\bib{BSVW}{collection}{
   title={Open problems in mathematical systems and control theory},
   series={Communications and Control Engineering Series},
   author={Blondel, V.},
   author={Sontag, E.},
   author={Vidyasagar, M.},
  author ={Willems, J. C. (eds.)},
   publisher={Springer-Verlag London Ltd.},
   place={London},
   date={1999},
   pages={xii+289}
}

\bib{Garnett}{book}{
   author={Garnett, J. B.},
   title={Bounded Analytic Functions},
   series={Graduate Texts in Mathematics},
   volume={236},
   edition={1},
   publisher={Springer},
   place={New York},
   date={2007},
   pages={xiv+459},
}

\bib{Gode}{article}{
   author={Godefroid, M.},
   title={D\'eterminants sur certains anneaux non commutatifs},
   language={French, with English summary},
   journal={C. R. Acad. Sci. Paris S\'er. I Math.},
   volume={301},
   date={1985},
   number={10},
   pages={467--470}
}

\bib{GM}{article}{
   author={Goodearl, K. R.},
   author={Menal, P.},
   title={Stable range one for rings with many units},
   journal={J. Pure Appl. Algebra},
   volume={54},
   date={1988},
   number={2-3},
   pages={261--287}
 }

\bib{JMW}{article}{
   author={Jones, P. W.},
   author={Marshall, D.},
   author={Wolff, T.},
   title={Stable rank of the disc algebra},
   journal={Proc. Amer. Math. Soc.},
   volume={96},
   date={1986},
   number={4},
   pages={603--604}
}

\bib{MM}{article}{
   author={Menal, P.},
   author={Moncasi, J.},
   title={$K\sb{1}$ of von Neumann regular rings},
   journal={J. Pure Appl. Algebra},
   volume={33},
   date={1984},
   number={3},
   pages={295--312}
 }

\bib{MortRup}{article}{
   author={Mortini, R.},
   author={Rupp, R.},
   title={Totally reducible elements in rings of analytic functions},
   journal={Comm. Algebra},
   volume={20},
   date={1992},
   number={6},
   pages={1705--1713}
}

\bib{RS1}{article}{
  author={Rupp, R.},
  author={Sasane, A.},
  title={On the stable rank and reducibility in algebras of real symmetric functions},
  journal={Math. Nachr.},
  date={to appear}
}

\bib{Rudin}{book}{
   author={Rudin, W.},
   title={Real and Complex Analysis},
   edition={3},
   publisher={McGraw-Hill Book Co.},
   place={New York},
   date={1987},
   pages={xiv+416},
   isbn={0-07-054234-1}
}

\bib{Vidy}{book}{
    author={Vidyasagar, M.},
     title={Control System Synthesis: A Factorization Approach},
    series={MIT Press Series in Signal Processing, Optimization, and
            Control, 7},
      publisher={MIT Press},
     place={Cambridge, MA},
      date={1985},
     pages={xiii+436},
      isbn={0-262-22027-X},
 }
 
\bib{Wick1}{article}{
author={Wick, B. D.},
   title={A note about stabilization in $A_\R(\D)$},
   journal={Math. Nachr.},
  date={to appear}

}

\end{biblist}

\end{document}